\documentclass{amsart}

\usepackage{graphicx,graphics, amsthm, amsfonts, amsbsy,amssymb, amsmath, cite, array}
\everymath{\displaystyle}
\usepackage[margin=1in]{geometry}
\usepackage{tikz}
\usetikzlibrary{calc}

\allowdisplaybreaks
\renewenvironment{proof}{{\noindent\bfseries Proof.}}{\qed}

\newtheorem{theorem}{Theorem}[section]

\newtheorem{lemma}[theorem]{Lemma}
\newtheorem{corollary}[theorem]{Corollary}

\def\qed{\nolinebreak\hfill\rule{.2cm}{.2cm}\par\addvspace{.5cm}}

\title{Betti numbers of skeletons of thick trees}

\author {Ralf Fr\"oberg}
\address{Department of Mathematics, Stockholm University, Stockholm, Sweden}
\email{frobergralf@gmail.com}
\date{}
\subjclass{13F55, 13D40,13C70}
\begin{document}
\maketitle

\begin{abstract}
A tree could be defined as follows. An edge is a tree. If $T_{k-1}=\cup_{i=1}^{k-1}e_i$ is a tree with $k-1$ edges $e_i$, and $e_k$ an edge, then $T_k=T_{k-1}\cup e_k$ is a tree if
$T_{k-1}\cap e_k$ is a point.
We generalize this construction: A simplex $S_1$ of dimension $\ge1$ is a thick tree. If $G_{k-1}=\cup_{i=1}^{k-1}S_i$ is a thick tree, where $S_i$ are
simplices of dimension $\ge1$, and $S_k$ a new simplex of dimension $\ge1$,
then $G_{k-1}\cup S_k$ is a thick tree if $G_{k-1}\cap S_k$ is a point.
All homological properties of Stanley-Reisner rings of thick trees are well known. 
We determine the Hilbert series and Betti numbers for Stanley-Reisner rings of skeletons of thick trees. From this one can read of projective dimension, regularity,
and judge when they are Cohen-Macaulay.
\end{abstract}

\section{Preliminaries}
An abstract simplicial complex $\Sigma$ on a vertex set $V=\{ x_1,\ldots,x_N\}$ is a set of subsets of $V$ such that $\{x_i\}\in\Sigma$ for all $i$, and if $\sigma\in\Sigma$
and $\tau\subset\sigma$ then $\tau\in\Sigma$. The subsets $\sigma$ are called faces, and the faces that are maximal with respect to inclusion are called
facets. A usual way to give a simplicial complex is to give its facets. Thus the simplicial complex $\{x_1,x_2,x_3\},\{x_1,x_4,x_5\}, \{x_5,x_{6}\}$ has the faces $\{x_1,x_2,x_3\},
\{x_1,x_4,x_5\},\{x_1,x_2\},\{x_1,x_3\},\{x_2,x_3\},\{x_1,x_4\},\{x_1,x_5\},\{x_4,x_5\},\{x_5,x_6\}, 
\{x_1\},\{x_2\},\{x_3\},\\
\{x_4\},\{x_5\},\{x_6\},\emptyset$. A geometric representation of this complex, which is a thick tree, is two triangles with one common point $\{x_1\}$, and an edge
$\{x_5,x_6\}$ (see Figure \ref{complex}). It is a thick tree, i.e. of the form $\cup_{i=1}^kS_i$, where $S_i$ are simplices of dimension $\ge1$ and such that
$\cup_{i=1}^l S_i\cap S_{l+1}$ is a point for $l=1,\ldots,k$.
The complex in the figure is built in the following way. We start with the simplex $\{x_1,x_2,x_3\}$. Then attach the
simplex $\{x_1,x_4,x_5\}$ in  $\{x_1\}$. Finally attach the edge $\{x_5,x_6\}$ in $\{x_5\}$.

\begin{figure}[h]
	\centering
	\begin{tikzpicture}[scale=1.1, line join=round, line cap=round]
		\coordinate (x1) at (0,0);
		
		\coordinate (x3) at (-2,1.2);
		\coordinate (x2) at (-2,-1.2);
		
		\coordinate (x4) at (2,1.2);
		\coordinate (x5) at (2,-1.2);
		
		\coordinate (x6) at (0,-2.2);
		
		\fill[blue!15] (x1) -- (x2) -- (x3) -- cycle;
		\fill[blue!15] (x1) -- (x4) -- (x5) -- cycle;
		
		\draw[thick] (x1) -- (x2) -- (x3) -- cycle;
		\draw[thick] (x1) -- (x4) -- (x5) -- cycle;
		\draw[thick] (x5) -- (x6);
		\foreach \p in {x1,x2,x3,x4,x5,x6} {
			\fill (\p) circle (2pt);
		}
		
		\node[font=\small, above] at (x1) {$x_1$};
		\node[font=\small, below] at (x2) {$x_2$};
		\node[font=\small, above] at (x3) {$x_3$};
		\node[font=\small, above] at (x4) {$x_4$};
		\node[font=\small, below] at (x5) {$x_5$};
		\node[font=\small, above] at (x6) {$x_6$};
		
	\end{tikzpicture}
	\caption{Pictorial representation of the simplicial complex $\{x_1,x_2,x_3\},\{x_1,x_4,x_5\}, \{x_5,x_{6}\}$.}
\label{complex}
\end{figure}
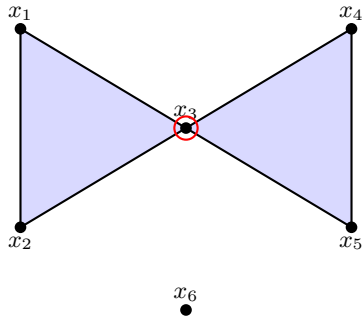

The dimension of a face is the number of its vertices minus one, and the dimension of a simplicial complex is the maximal dimension of its facets.

The Stanley-Reisner ring  $K[\Sigma]$ over a field $K$ of a simplicial complex
$\Sigma$ with vertex set $\{x_1,\ldots,x_N\}$ is $K[x_1,\ldots,x_N]/I$, where $I$ is generated by all squarefree monomials $x_{i_1}\cdots x_{i_n}$ for which
$\{x_{i_1},\ldots,x_{i_n}\}$ is not a face of $\Sigma$. For the example above, $K[x_1,\ldots,x_6]/(x_6(x_1,x_2,x_3,x_4)+(x_2,x_3)(x_4,x_5))$ is the Stanley-Reisner ring.
The Stanley-Reisner ring is often used to compare topological properties of the geometric representation of the
complex with algebraic properties of its Stanley-Reisner ring. For a graded $K$-algebra $R=\oplus_{i\ge0}R_i$, the Hilbert series of $R$ is the formal power series
$R(t)=\sum_{i\ge0}\dim_K(R_i)t^i$.

If $f_i=f_i(\Sigma)$ is the number of $i$-dimensional faces in $\Sigma$, then the $f$-vector of $\Sigma$ is $(f_{-1},f_0,\ldots,f_{\dim\Sigma})$, 
where $f_{-1}=1$, counting the empty set.

The following lemma is well known and easy to prove.

\begin{lemma}\label{fvector}
If $\Sigma$ is a simplicial complex with $f$-vector $(1,f_0,f_1,\ldots,f_d)$, then the Hilbert series of $K[\Sigma]$ is
$$1+\frac{f_0t}{1-t}+\frac{f_1t^2}{(1-t)^2}+\cdots+\frac{f_dt^{d+1}}{(1-t)^{d+1}}.$$
\end{lemma}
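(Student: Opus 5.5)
The plan is to use the standard fact that $K[\Sigma]$ has a $K$-basis of monomials, and to sort these monomials by their support. A monomial $m=x_{i_1}^{a_1}\cdots x_{i_n}^{a_n}$ with all $a_j\ge1$ is nonzero in $K[\Sigma]=K[x_1,\ldots,x_N]/I$ exactly when $m\notin I$. Since $I$ is generated by squarefree monomials, $m\in I$ if and only if its radical $x_{i_1}\cdots x_{i_n}$ lies in $I$. So $m\ne0$ in $K[\Sigma]$ precisely when its support $\{x_{i_1},\ldots,x_{i_n}\}$ is a face of $\Sigma$. The monomials not in $I$ form a $K$-basis of the quotient, because $I$ is a monomial ideal. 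We therefore get the disjoint decomposition
$$K[\Sigma]\cong\bigoplus_{\sigma\in\Sigma}\ \bigoplus_{\operatorname{supp}(m)=\sigma}K\cdot m$$
as graded vector spaces.

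The key steps, in order:
\begin{enumerate}
\item Justify the monomial basis and the support criterion, as above.
\item Group the basis monomials by their support $\sigma$.
\item For a fixed face $\sigma$ with $|\sigma|=i+1$ (so $\dim\sigma=i$), compute the generating function of the monomials with support exactly $\sigma$. Each of the $i+1$ exponents ranges independently over $a\ge1$, so the series is
$$\Bigl(\sum_{a\ge1}t^a\Bigr)^{i+1}=\frac{t^{i+1}}{(1-t)^{i+1}}.$$
\item Sum over all faces. There are $f_i$ faces of dimension $i$, and the empty face contributes the constant monomial $1$, which matches $f_{-1}=1$. This gives $\sum_{i=-1}^{d} f_i\,t^{i+1}/(1-t)^{i+1}$, which is exactly the displayed formula.
\end{enumerate}

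No step is a real obstacle. The only point that needs care is step 1, namely that a monomial lies in a squarefree monomial ideal iff its support does. This holds because a squarefree generator $u$ divides $m$ iff $u$ divides the radical of $m$.
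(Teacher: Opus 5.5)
Your proof is correct and complete. The paper gives no proof, calling the lemma well known and easy; your argument (sort the standard monomials of $K[\Sigma]$ by support, so each $i$-dimensional face contributes $t^{i+1}/(1-t)^{i+1}$ and the empty face contributes $1$) is the standard one, and the only compressed step, $x_\sigma\notin I$ for a face $\sigma$, holds because a generator $x_G$ dividing $x_\sigma$ would make the non-face $G$ a subset of $\sigma$, contradicting closure of $\Sigma$ under subsets.
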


\medskip
If $\sigma$ is a face in $\Sigma$, then the link of $\sigma\in\Sigma$ is $\{\tau\in\Sigma;\tau\cap\sigma=\emptyset,\tau\cup\sigma\in\Sigma\}$.

\medskip
For a simplicial complex $\Sigma$, the $k$-skeleton, which we denote by $\Sigma^{(k)}$, consists of all faces in $\Sigma$ of dimension $\le k$.
If $\Sigma$ has $f$-vector $(1,f_0,f_1,\ldots,f_d)$, then $\Sigma^{(k)}$ has $f$-vector $(1,f_0,\ldots,f_k)$ if $k\le\dim\Sigma$.

A simplicial complex of dimension one is a graph. There is another common way to compare properties of a graph $(V,E)$ with vertex set $V=\{x_1,\ldots,x_N\}$
and edge set $E$ with algebraic properties of an ideal in an algebra, the edge ideal. The edge ideal is generated by all $x_ix_j$ in $K[x_1,\ldots,x_N]$ for which 
$\{x_i,x_j\}\in E$. We will all the time consider the Stanley-Reisner ideal if the dimension of the complex is one, not the edge ideal.

If $I$ is a graded ideal in $S=K[x_1,\ldots,x_N]$, then $S/I$, as any finitely generated graded $S$-module, has a minimal resolution

$$0\leftarrow S/I\leftarrow S\leftarrow \bigoplus_{j\ge2}S[-j]^{\beta_{1,j}}\leftarrow\cdots\leftarrow\bigoplus_{j\ge p+1}S[-j]^{\beta_{p,j}}\leftarrow 0$$
where $S[-j]$ means that the degrees of $S$ are shifted so that $S[-j]_d=S_{d-j}$. The numbers $\beta_{i,j}$ are called the graded Betti numbers of $S/I$.
Using that for each degree of the resolution, we have an exact sequence of vector spaces,
so the alternating sum of their dimensions is 0, we get the well known formula for the Hilbert series of $S/I$ 

\begin{equation}\label{hilbser}
S/I(t)=\sum_{i,j}(-1)^i\beta_{i,j}t^j/(1-t)^N.
\end{equation}

The projective dimension pd$(S/I)$ of $S/I$ is $\max\{i;\beta_{i,j}\ne0\}$, the regularity of $S/I$ is $\max\{j-i;\beta_{i,j}\ne0\}$. 
If $S/I$ has a minimal resolution as above, then $I$ has a minimal resolution
$$0\leftarrow I\leftarrow \bigoplus_{j\ge2}S[-j]^{\beta_{1,j}}\leftarrow\cdots\leftarrow\bigoplus_{j\ge p+1}S[-j]^{\beta_{p,j}}\leftarrow 0,$$
so pd$(I)=$pd$(S/I)-1$ and reg$(I)$=reg$(S/I)+1$.
If the minimal resolution of $I$ looks like this:
$$0\leftarrow I\leftarrow S^{b_1}[-t]\leftarrow S^{b_2}[-t-1]\leftarrow\cdots\leftarrow S^{b_l}[-t-l+1]\leftarrow 0,$$
then $I$ (and $S/I$) is said to have a $t$-linear resolution. It follows that in this case the only Betti numbers $\beta_{i,j}(S/I)$, $i>0$, are
$\beta_{i,i+t-1}$, and knowing the Betti numbers is equivalent to knowing the Hilbert series by equation \ref{hilbser}.

\medskip
Of fundamental importance is the following result of Hochster, \cite{ho}.

\begin{theorem}
For a simplicial complex $\Sigma$ on $V$
$$\beta_{i,j}(K[\Sigma])=\sum_{\overset{S\subseteq V}{|S|=j}}\dim_K\widetilde H_{j-i-1}(\Sigma_S;K),$$
where $\Sigma_S$ is the induced subcomplex on $S$, and $\widetilde H$ stands for reduced homology.
\end{theorem}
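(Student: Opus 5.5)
We would prove Hochster's formula by computing $\beta_{i,j}(K[\Sigma])=\dim_K \operatorname{Tor}_i^S(K[\Sigma],K)_j$, where $S=K[x_1,\ldots,x_N]$. We use the Koszul resolution of $K$ rather than a resolution of $K[\Sigma]$. Everything is $\mathbb{Z}^N$-graded, since the Stanley–Reisner ideal is monomial. So $\operatorname{Tor}_i$ splits into multigraded pieces indexed by $a\in\mathbb{N}^N$, and $\beta_{i,j}=\sum_{|a|=j}\dim_K\operatorname{Tor}_i(K[\Sigma],K)_a$. The plan has three steps. First, show that only squarefree multidegrees $a$ contribute. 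Second, identify the squarefree piece with support $W\subseteq V$ with a simplicial (co)chain complex of $\Sigma_W$. Third, pass from cohomology to homology, which is harmless over a field.

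Concretely, $\operatorname{Tor}_i(K[\Sigma],K)$ is the $i$-th homology of $K[\Sigma]\otimes \bigwedge^\bullet K^N$, with Koszul differential
$$\partial(m\otimes e_F)=\sum_{v\in F}\pm\, x_v m\otimes e_{F\setminus v}.$$
A $K$-basis of this complex in multidegree $a$ consists of elements $m\otimes e_F$ with $F\subseteq V$ and $m$ a monomial of degree $a-\mathbf{1}_F$ that is nonzero in $K[\Sigma]$, i.e. $\operatorname{supp}(m)\in\Sigma$. Here $\mathbf{1}_F$ denotes the $0/1$ indicator vector of $F$.

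For the first step, fix $a$ and suppose some coordinate $a_v\ge 2$. We show the complex in degree $a$ is acyclic, using a contracting homotopy that inserts or removes $e_v$. A cleaner route: since $a_v\geq2$, $x_v$ divides $m$ for every basis element, whether or not $v\in F$. So the complex is the mapping cone of multiplication by $x_v$ between two isomorphic pieces, the one with $v\in F$ and the one with $v\notin F$. Multiplication by $x_v$ is an isomorphism between them in this degree, because supports do not change. Hence the cone is exact.

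For the second step, take $a=\mathbf{1}_W$ squarefree. A basis element is $x^{W\setminus F}\otimes e_F$ with $F\subseteq W$ and $W\setminus F\in\Sigma$. Setting $\sigma=W\setminus F$, the basis is indexed by the faces $\sigma$ of $\Sigma_W$. The Koszul differential sends $\sigma\mapsto\sum_{v\in W\setminus\sigma,\ \sigma\cup v\in\Sigma}\pm(\sigma\cup v)$, which is exactly the simplicial coboundary. Homological degree $i=|F|$ corresponds to $\dim\sigma=|W|-i-1$. The empty face ($F=W$) occurs, so this is the augmented cochain complex. Thus $\operatorname{Tor}_i(K[\Sigma],K)_{\mathbf{1}_W}\cong\widetilde H^{|W|-i-1}(\Sigma_W;K)$.

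For the third step, over a field $\widetilde H^k\cong\widetilde H_k$ dually. Summing over $|W|=j$ then gives the formula.

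The main obstacle is checking the signs in the second step. We must verify that the Koszul signs $\pm$ agree, up to a global sign change of basis vectors, with the coboundary signs for the chosen vertex order. We would fix the ordering $x_1<\cdots<x_N$ and compare the sign $(-1)^{\#\{u\in F:u<v\}}$ with $(-1)^{\#\{u\in\sigma:u<v\}}$. Their ratio depends only on $W$ and $v$, so it can be absorbed by rescaling each basis element by $(-1)^{\sum_{u\in\sigma}\#\{w\in W:w<u\}}$ or a similar factor. If that fails, the fallback is that ranks of differentials are what matter, and any consistent sign twist giving $d^2=0$ on the same incidence pattern yields isomorphic homology.
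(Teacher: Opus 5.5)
The paper does not prove this statement; it quotes it as Hochster's theorem and uses it as a black box. Your proposal therefore supplies an argument the paper omits. What it buys is self-containedness, where the paper relies on citation.

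Your route is the standard Koszul-complex proof, and it is correct.

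\begin{itemize}
\item \emph{Non-squarefree degrees.} In a multidegree $a$ with $a_v\ge 2$, $v$ already lies in the support of every monomial coefficient. So multiplication by $x_v$ is a chain isomorphism between the $v\notin F$ part and the (shifted) $v\in F$ part, and the cone is exact.
\item \emph{Squarefree degrees.} In degree $\mathbf{1}_W$ the basis $x^{\sigma}\otimes e_{W\setminus\sigma}$ is indexed by the faces $\sigma$ of $\Sigma_W$, including $\emptyset$, with $i=|W|-|\sigma|$. This gives $\widetilde H^{|W|-i-1}(\Sigma_W;K)$, which over a field has the same dimension as $\widetilde H_{|W|-i-1}(\Sigma_W;K)$.
\item \emph{Signs.} Your fix works exactly as stated. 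Take $\epsilon(\sigma)=(-1)^{\sum_{u\in\sigma}\#\{w\in W:w<u\}}$. Then $\epsilon(\sigma)\epsilon(\sigma\cup v)=(-1)^{\#\{w\in W:w<v\}}$. This is precisely the product of the Koszul sign $(-1)^{\#\{u\in F:u<v\}}$ and the coboundary sign $(-1)^{\#\{u\in\sigma:u<v\}}$, because $F\sqcup\sigma=W$ and $v\in F$. Hence $\sigma\mapsto\epsilon(\sigma)\,x^{\sigma}\otimes e_{W\setminus\sigma}$ is an isomorphism from the augmented cochain complex of $\Sigma_W$ onto the degree-$\mathbf{1}_W$ Koszul complex.
\end{itemize}

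Delete your fallback sentence. As a general principle, ``same incidence pattern and $d^2=0$ gives isomorphic homology'' is false. For example, over a field of characteristic $\neq 2$ the matrices $\begin{pmatrix}1&1\\1&1\end{pmatrix}$ and $\begin{pmatrix}1&1\\1&-1\end{pmatrix}$ have different ranks, and a two-term complex imposes no $d^2$ condition at all. You do not need the fallback, since the explicit rescaling already does the job.
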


Thick trees are a special case of simplicial complexes called fat forests. Fat forests are defined recursively as follows. 
A $d$-simplex (i.e. with $d+1$ vertices) $F_1$ of dimension $\ge0$ is a fat forest.
If $F_i$, $i=1,\ldots,k$, are faces and $G_{k-1}=F_1\cup\cdots\cup F_{k-1}$ is a fat forest, then $G_{k-1}\cup F_k$ is a fat forest if $H=G_{k-1}\cap F_k$ is a simplex, $\dim H\ge -1$. 
(If $\dim H=-1$, then $G_{k-1}$ and $F_k$ are disjoint.) It is known \cite{fr,fr1} that fat forests are precisely the simplicial complexes 
for which the Stanley-Reisner rings have 2-linear resolutions. If $S/I$ is a
Stanley-Reisner ring with 2-linear resolution, then the ideal has generators of degree 2, and can thus be interpreted as an edge ideal of a graph. These graphs $G$
are exactly \cite{fr,fr1} the graphs for which the complement graph is chordal. (The complement graph $\overline G$ to $G$ has the same vertices as $G$, and $\{x_i,x_j\}$ is
an edge in $\overline G$ if and only if $\{x_i,x_j\}$ is not an edge in $G$.) A graph is chordal if any cycle with at least four vertices has a chord, or, equivalently,
if the only induced cycles have length three. In \cite{fr1} the Hilbert
series, the Betti numbers, the projective dimension, and the depth of Stanley-Reisner rings $S/I$ of fat forests are determined. The regularity of $S/I$ is one, so reg$(I)=2$.

\section{Prerequisites}
Our starting point is thick trees. Thick trees are those fat forests for which $G_{k-1}\cap F_k$ is a point for each $k$. We now introduce a notation for thick trees.
Let $\Delta(n_1,\ldots,n_e)$ be a simplicial complex with $e$ facets $S_i$, $\dim S_i=n_i-1\ge1$ (so with $n_i\ge2$ vertices), such that $(S_1\cup\cdots\cup S_i)\cap S_{i+1}$ 
is a point for all $i$, $i=1,\ldots,e-1$. When all $n_i=n$, we denote the complex by $\Delta(n,e)$. We have that 
$\Delta(n_1,\ldots,n_e)$ has $N=\sum_{i=1}^en_i-(e-1)$ vertices and $\dim\Delta(n_1,\ldots,n_e)=\max\{n_i\}-1$. The following Lemma is a special case of \cite[Theorem 1]{fr1}.

\begin{lemma}\label{spec} The Hilbert series of $K[\Delta(n_1,\ldots,n_e)]$ is $$\sum_{i=1}^e\frac{1}{(1-t)^{n_i}}-\frac{e-1}{1-t}.$$ 
pd$(K[\Delta(n_1,\ldots,n_e)])=N-2,$ depth($K[\Delta(n_1,\ldots,n_e)])=2$, and reg$(K[\Delta(n_1,\ldots,n_e)])=1$.
\end{lemma}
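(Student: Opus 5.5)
The plan has three parts: get the Hilbert series from the face counts, get the depth and projective dimension from the homology of induced subcomplexes via Hochster's formula, and get the regularity from the 2-linear resolution of fat forests.

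\emph{Hilbert series.} We proceed by induction on $e$, using Lemma~\ref{fvector}. The Hilbert series of $K[\Sigma]$ is additive over faces: it equals $\sum_{\sigma\in\Sigma}\left(t/(1-t)\right)^{|\sigma|}$. For $e=1$ we have $\Delta(n_1)=S_1$, a full simplex on $n_1$ vertices, whose Stanley--Reisner ring is the polynomial ring in $n_1$ variables. Its series is $1/(1-t)^{n_1}$, in agreement with the formula. For the inductive step, $G_{e-1}\cup S_e$ is obtained by gluing along a single vertex $v$. Every face of the union lies in $G_{e-1}$ or in $S_e$, and the faces lying in both are exactly $\emptyset$ and $\{v\}$. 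Hence
\[
F_{G_{e-1}\cup S_e}(t)=F_{G_{e-1}}(t)+F_{S_e}(t)-F_{\{v\}}(t).
\]
Here $F_{\{v\}}(t)=1+t/(1-t)=1/(1-t)$ and $F_{S_e}(t)=1/(1-t)^{n_e}$. Induction then gives $\sum_i 1/(1-t)^{n_i}-(e-1)/(1-t)$.

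\emph{Depth and projective dimension.} By Auslander--Buchsbaum, $\mathrm{pd}=N-\mathrm{depth}$, so it suffices to show $\mathrm{pd}=N-2$. We use Hochster's formula. The full vertex set gives $\Sigma_V=\Sigma$, which is contractible; indeed every thick tree is contractible, since we successively glue contractible simplices along a point. So subsets of size $N$ contribute nothing, and $\beta_{i,j}=0$ for $i\ge N-1$, because $\widetilde H_{j-i-1}$ with $j\le N$ and $j-i-1\le 0$ only allows $\widetilde H_{-1}$ or $\widetilde H_0$ on proper subsets of the appropriate size.

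To make this precise: first, $\beta_{N-1,j}$ requires $j-N\ge -1$, so $j=N$ and $\widetilde H_0(\Sigma)$, which is $0$. Second, $\beta_{N,N}$ would need $\widetilde H_{-1}(\Sigma)$, which is also $0$. It remains to show $\beta_{N-2,N-1}\neq0$, i.e.\ that some $(N-1)$-subset $S$ has $\Sigma_S$ disconnected. Since $e\ge2$, take $S=V\setminus\{v\}$ with $v$ a gluing vertex. Removing $v$ disconnects $S_{e}\setminus\{v\}$, which is nonempty because $n_e\ge2$, from the rest. Thus $\widetilde H_0(\Sigma_S)\neq0$. If $e=1$, then $\Sigma$ is a simplex with $N=n_1$; the statement then needs separate handling, where the polynomial ring has $\mathrm{pd}=0$. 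I would note that the claims are meant for $e\ge2$, or check the convention used there.

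\emph{Regularity.} A thick tree is a fat forest, so by \cite{fr,fr1} its Stanley--Reisner ideal has a 2-linear resolution. Hence $\mathrm{reg}(S/I)=1$ when $I\ne0$. This can also be seen directly from Hochster's formula: every induced subcomplex of a thick tree is a disjoint union of thick trees and points, hence has homology only in degree $0$, so $j-i-1=0$.

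The main obstacle is the pd/depth claim, specifically justifying that all induced subcomplexes have vanishing higher homology and pinning down the nonvanishing top Betti number. The cleanest route is to show that induced subcomplexes of thick trees are forests of thick trees. Once that holds, the Hilbert series argument and Hochster's formula do the rest.
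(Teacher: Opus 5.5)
Your proof is correct for $e\ge 2$, but it takes a different route from the paper. The paper gives no argument: it observes that $\Delta(n_1,\ldots,n_e)$ is a fat forest and quotes \cite[Theorem 1]{fr1}, which computes the Hilbert series, Betti numbers, projective dimension and depth of all fat forests at once.

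You instead derive everything directly:
\begin{itemize}
\item the Hilbert series by inclusion--exclusion over faces, since $G_{e-1}$ and $S_e$ share only $\emptyset$ and $\{v\}$;
\item projective dimension and depth from Hochster's formula plus Auslander--Buchsbaum, where connectedness of $\Delta$ kills $\beta_{i,j}$ for $i\ge N-1$, and the cut vertex $v=G_{e-1}\cap S_e$ gives $\beta_{N-2,N-1}\neq 0$;
\item regularity from the 2-linear resolution of fat forests (as the paper does), with a direct Hochster argument as an alternative.
\end{itemize}

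The citation buys generality and the full Betti table, which the paper reuses in Theorem \ref{main}. Your argument buys self-containment and a transparent reason for each number. It also shows that the projective dimension, depth and regularity claims need $e\ge 2$: for $e=1$ the ring is a polynomial ring with projective dimension $0$, depth $n_1$ and regularity $0$. The paper leaves this hypothesis implicit.

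Three small points to tidy up:
\begin{itemize}
\item In the case $i=N-1$, the condition $j-N\ge -1$ allows $j=N-1$ as well as $j=N$. The case $j=N-1$ gives $\widetilde H_{-1}$ of a nonempty complex, which is $0$, so say this explicitly.
\item Your closing remark misplaces the obstacle. The projective dimension argument uses only connectedness of $\Delta$ and the cut vertex. The vanishing of higher homology of induced subcomplexes is needed only for the upper bound on the regularity.
\item For that vanishing, the short justification is this. For any $W$, the intersection $(S_1\cup\cdots\cup S_{i-1})\cap S_i=\{v_i\}$ restricts to at most one point. So $\Delta_W$ is built by adding full simplices, each meeting the earlier union in at most one point, and Mayer--Vietoris gives $\widetilde H_p(\Delta_W)=0$ for $p\ge 1$. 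Then $\beta_{1,2}\neq 0$, since $I\neq 0$ when $e\ge 2$, pins the regularity at exactly $1$.
\end{itemize}
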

We denote the $k$-skeleton of $\Delta(n_1,\ldots,n_e)$ by $\Delta(n_1,\ldots,n_e)^{(k)}$.
Our aim is to study the algebraic properties of the Stanley-Reisner rings $K[\Delta(n_1,\ldots,n_e)^{(k)}]$. 

\section{Results}
The following lemma is trivial.

\begin{lemma}\label{size}
$\Delta(n_1,\ldots,n_e)^{(k)}$ has $f$-vector $(1,\sum_{i=1}^en_i-(e-1),\sum_{i=1}^e{n_i\choose2},\ldots,\sum_{i=1}^e{n_i\choose s+1})$, where $s=\min\{k,\max\{n_i\}-1\}$.
\end{lemma}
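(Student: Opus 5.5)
The plan is to count faces of $\Delta=\Delta(n_1,\ldots,n_e)$ dimension by dimension, and then truncate at the skeleton level using the remark in the Preliminaries that $\Sigma^{(k)}$ has $f$-vector $(1,f_0,\ldots,f_k)$ when $k\le\dim\Sigma$.

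First I describe the faces of $\Delta$. Every face lies in some facet $S_i$, since the $S_i$ are the facets. So the set of faces of dimension $j$ is the union over $i$ of the $j$-dimensional faces of the simplices $S_i$, and $S_i$ contributes ${n_i\choose j+1}$ of them.

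The key step is to show that for $j\ge1$ this union is disjoint, so the counts simply add. I argue by induction on the construction. Suppose $\tau$ is a face with at least two vertices lying in $S_a$ and $S_b$ with $a<b$. Then $\tau\subseteq (S_1\cup\cdots\cup S_{b-1})\cap S_b$, which by definition of $\Delta(n_1,\ldots,n_e)$ is a single point. This contradicts $|\tau|\ge2$. Hence
\[
f_j=\sum_{i=1}^e{n_i\choose j+1}\qquad\text{for } j\ge1.
\]

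For vertices ($j=0$) the sets do overlap. Each new simplex $S_{i+1}$ shares exactly one vertex with the union of the earlier ones, so it adds $n_{i+1}-1$ new vertices. This gives $f_0=n_1+\sum_{i\ge2}(n_i-1)=\sum_{i=1}^e n_i-(e-1)$, matching the count of $N$ already stated in the paper. Also $f_{-1}=1$.

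Finally I truncate. Faces of dimension $j$ exist exactly when $j\le \max\{n_i\}-1=\dim\Delta$, since ${n_i\choose j+1}=0$ once $j+1>n_i$. The $k$-skeleton keeps the faces of dimension $\le k$, so its $f$-vector stops at index $s=\min\{k,\max\{n_i\}-1\}$. If $k\ge\dim\Delta$, the skeleton is all of $\Delta$ and $s=\dim\Delta$, consistent with the formula.

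There is no real obstacle. The only point needing care is the disjointness of faces of dimension $\ge1$ across different facets, and that follows directly from the single-point intersection property.
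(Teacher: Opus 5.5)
Your proof is correct. The paper gives no proof at all (it just calls the lemma trivial), and your direct count is exactly the routine verification it leaves to the reader: faces of dimension $\ge1$ from different facets cannot coincide because $(S_1\cup\cdots\cup S_{b-1})\cap S_b$ is a single vertex, each new simplex adds $n_i-1$ new vertices, and the skeleton truncates at $s$. Your separate treatment of $f_0$ also fixes the statement's notation for $k=0$, where the displayed list would otherwise end in $\sum_i{n_i\choose 1}$ instead of $N$.
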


The lemmas \ref{fvector} and \ref{size} give the following theorem.

\begin{corollary}\label{hill}
$K[\Delta(n_1,\ldots,n_e)^{(k)}]$ has Hilbert series 
$$1+\frac{Nt}{1-t}+\frac{\sum_{i=2}^{s+1}c_it^i}{(1-t)^i}=\frac{(1-t)^N+Nt(1-t)^{N-1}+\sum_{i=2}^{s+1}c_it^i(1-t)^{N-i}}{(1-t)^N},$$
where $s=\min\{ k,\max\{n_i\}-1\}$, $N=\sum_{i=1}^en_i-(e-1)$, and $c_j=\sum_{i=1}^e{n_i\choose j}$.
\end{corollary}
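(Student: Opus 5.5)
The plan is to substitute the $f$-vector from Lemma \ref{size} directly into the formula of Lemma \ref{fvector}. No homological input is needed, because the Hilbert series of a Stanley-Reisner ring depends only on the $f$-vector.

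First I set $s=\min\{k,\max\{n_i\}-1\}$ and record the dimension. The complex $\Delta(n_1,\ldots,n_e)$ has dimension $\max\{n_i\}-1$, so its $k$-skeleton has dimension $s$. Its faces are exactly the faces of dimension $\le s$ of $\Delta(n_1,\ldots,n_e)$.

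Next I verify the entries of the $f$-vector; this is the content of Lemma \ref{size}, and I would recheck it.
\begin{itemize}
\item The vertex count is $f_0=N=\sum n_i-(e-1)$. Each new simplex $S_{i+1}$ shares exactly one vertex with $S_1\cup\cdots\cup S_i$.
\item For $j\ge1$, each face of dimension $j$ lies in a unique facet $S_i$. Two distinct facets meet in at most a point, since the intersection of $S_{i+1}$ with the union of earlier facets is a single point. Hence $f_j=\sum_i\binom{n_i}{j+1}=c_{j+1}$.
\end{itemize}
This uniqueness of the containing facet for faces of dimension at least one is the only point needing a little care, and it is where one might stumble. It follows from the recursive gluing at one point.

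Then I apply Lemma \ref{fvector} to the $f$-vector $(1,N,c_2,\ldots,c_{s+1})$. This gives
$$1+\frac{Nt}{1-t}+\sum_{j=1}^{s}\frac{c_{j+1}t^{j+1}}{(1-t)^{j+1}}.$$
Reindexing with $i=j+1$ turns this into the stated left-hand form. Finally I put everything over the common denominator $(1-t)^N$ by multiplying the $i$-th term by $(1-t)^{N-i}$, which gives the right-hand form. This is legitimate since $s+1\le\max n_i\le N$, so $N-i\ge0$ for every term.
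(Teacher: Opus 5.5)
Your proposal is correct and follows the paper's route: the paper obtains the corollary directly by substituting the $f$-vector of Lemma \ref{size} into Lemma \ref{fvector}. The details you add, which the paper leaves implicit, are also correct: distinct $S_i$ meet in at most a point, so faces of dimension $\ge1$ lie in a unique simplex, and $N-i\ge0$ holds for the common denominator.
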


Some of these skeletons are studied earlier. If $k\ge \max\{n_i\}-1$, then the $k$-skeleton of $\Delta(n_1,\ldots,n_e)$ is equal to
$\Delta(n_1,\ldots,n_e)$, which is a thick tree. If $k=0$, the $k$-skeleton of $\Delta(n_1,\ldots,n_e)$ is just a set of $N$
vertices, which is another example of a fat forest. If $k=1$, then, if $\Delta(n_1,\ldots,n_e)=\cup_{i=1}^e S_i$, and the points $(S_1\cup\cdots\cup S_i)\cap S_{i+1}$ are all different,
the $k$-skeleton $\Delta(n_1,\ldots,n_e)^{(1)}$ is studied in \cite{ra} under the name of Wollastonite graphs.

Let $e\ge 2$ and let $n_1,n_2,\dots,n_e\ge 2$ be integers.  The \emph{Wollastonite graph} $W(n_1,n_2,\dots,n_e)$ \cite{ra} is obtained by taking
pairwise vertex-disjoint complete graphs $K_{n_1},\,K_{n_2},\,\dots,\,K_{n_e},$ 
and for each $i=1,2,\dots,e-1$ identifying (gluing) exactly one vertex $v_i$ of
$K_{n_i}$ with one vertex of $K_{n_{i+1}}$.
Moreover, the chosen vertices $v_i$ in the intermediate blocks are all distinct.
In \cite{ra} the algebraic properties of their edge ideals, i.e., the ideals generated by $\{ x_ix_j\}$ for all edges  
$\{x_i,x_j\}$ in $\Delta(n_1,\ldots,n_e)_{(1)}$, while we study the Stanley-Reisner ideals. The edge ideal
is generated by all $x_ix_j$ where $x_i$ and $x_j$ belong to the same $K_{n_i}$, while the Stanley-Reisner ideal is generated by those $x_ix_j$
for which $x_i$ and $x_j$ do not belong to the same $K_{n_i}$.
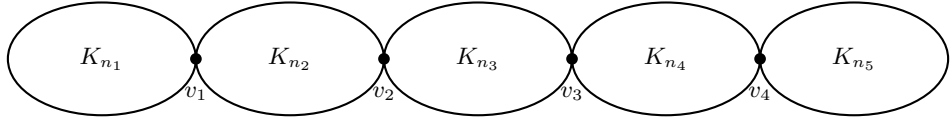
\begin{figure}[h]
	\centering
	\begin{tikzpicture}[scale=1, line cap=round, line join=round, font=\small]
		
		\def\R{1.25}   
		\def\r{0.75}   
		\def\dx{2.6}   
		\def\d{5}      
		
		\coordinate (C1) at (0,0);
		\coordinate (C2) at (\dx-.1,0);
		\coordinate (C3) at (2*\dx-.2,0);
		\coordinate (C4) at (3*\dx-.3,0);
		\coordinate (C5) at (4*\dx-.4,0);
		
		\foreach \i/\Ci in {1/C1,2/C2,3/C3,4/C4,5/C5} {
			\draw[thick] (\Ci) ellipse [x radius=\R, y radius=\r];
			\node at (\Ci) {$K_{n_{\i}}$};
		}
		
		\foreach \i/\A/\B in {1/C1/C2,2/C2/C3,3/C3/C4,4/C4/C5} {
			\path let \p1=(\A), \p2=(\B) in coordinate (S\i) at ($(\p1)!.5!(\p2)$);
			\fill (S\i) circle (2.2pt);
			\node[below=7pt] at (S\i) {$v_{\i}$};
		}
		
		\foreach \i/\A/\B in {1/C1/C2,2/C2/C3,3/C3/C4,4/C4/C5} {
			\draw (S\i) -- ++(0,0); 
		}
		
		
	\end{tikzpicture}
	\caption{The general Wollastonite graph $W(n_1,n_2,\dots,n_5).$}
	\label{wollostonite}
\end{figure}

\begin{lemma}
 $K[\Delta(n_1,\ldots,n_e)]=K[x_1,\ldots,x_N]/I$, $N=\sum_{i=1}^en_i-(e-1)$, where $I$ is generated by all $x_ix_j$ for which $x_i$ and $x_j$ 
 do not belong to the same simplex.
 \end{lemma}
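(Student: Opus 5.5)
The claim is that the minimal nonfaces of $\Delta=\Delta(n_1,\ldots,n_e)$ are exactly the pairs $\{x_i,x_j\}$ whose two vertices do not lie in a common simplex $S_l$. By the definition of the Stanley-Reisner ideal, $I$ is generated by the monomials of all nonfaces, and it suffices to take the minimal nonfaces. So the plan is to identify these minimal nonfaces.

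First I will describe the faces of $\Delta$. Since $\Delta=\bigcup_{l=1}^e S_l$ and each $S_l$ is a full simplex, a subset $\sigma\subseteq V$ is a face exactly when $\sigma\subseteq S_l$ for some $l$. In particular, a pair $\{x_i,x_j\}$ is a nonface exactly when $x_i$ and $x_j$ do not belong to the same simplex. Every such pair is a minimal nonface, because singletons are faces. Hence all these products $x_ix_j$ lie in $I$.

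The key step is the converse: every nonface $\sigma$ contains such a pair. Equivalently, I must show that if every pair of vertices of $\sigma$ lies in a common simplex, then $\sigma$ lies in a single $S_l$. This is the Helly/flag property of thick trees, and it is the only real obstacle. I will prove it by induction on $e$, the case $e=1$ being trivial.

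Write $\Delta=G_{e-1}\cup S_e$ with $G_{e-1}\cap S_e=\{p\}$. Suppose every pair of vertices of $\sigma$ lies in a common simplex.
\begin{itemize}
\item If $\sigma\subseteq V(G_{e-1})$, then every pair of its vertices lies in a common simplex among $S_1,\ldots,S_{e-1}$. This holds because a pair contained in $S_e$ and in $V(G_{e-1})$ lies in $S_e\cap G_{e-1}=\{p\}$, so it is not a genuine pair. Induction then places $\sigma$ inside one $S_l$ with $l\le e-1$.
\item If $\sigma\subseteq S_e$, there is nothing to prove.
\item Otherwise $\sigma$ contains a vertex $u\in S_e\setminus\{p\}$ and a vertex $w\in V(G_{e-1})\setminus\{p\}$. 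The only simplex containing $u$ is $S_e$, and $w\notin S_e$. So the pair $\{u,w\}$ lies in no common simplex, a contradiction.
\end{itemize}
This completes the induction. So every nonface contains a nonface pair, and the minimal nonfaces are exactly the pairs not lying in a common simplex.

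Finally, since $I$ is generated by the monomials of the minimal nonfaces, $I$ is generated by the $x_ix_j$ with $x_i,x_j$ not in the same simplex. The number of variables is $N=\sum_{i=1}^e n_i-(e-1)$, because each gluing identifies exactly one pair of vertices. This is consistent with the remark, made earlier for fat forests, that $I$ is generated in degree $2$.
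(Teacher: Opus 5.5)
Your proof is correct, but it takes a different route from the paper's. The paper does not analyze nonfaces directly. It invokes the cited result that Stanley--Reisner rings of fat forests have $2$-linear resolutions, concludes that $I$ is generated in degree $2$, and then notes that the quadratic generators are evidently the $x_ix_j$ with $x_i,x_j$ not in a common simplex.

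You replace that citation with an elementary induction along the gluing order. It shows that $\Delta(n_1,\ldots,n_e)$ is a flag complex, i.e.\ every minimal nonface has exactly two vertices. Your three cases are exhaustive, and each is justified. In the third case, $u\notin V(G_{e-1})$ makes $S_e$ the only simplex containing $u$, while $w\notin S_e$.

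The paper's version is one line, but it uses a much stronger theorem than needed: generation in degree $2$ is only the weakest consequence of a linear resolution. In context this costs nothing, since the $2$-linearity of $K[x_1,\ldots,x_N]/I_2$ is needed anyway for Theorem \ref{main}.

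Your version is self-contained. The same induction also goes through for arbitrary fat forests: replace $p$ by the simplex $H=G_{k-1}\cap F_k$, which is a face of $G_{k-1}$ and so lies in some earlier facet. This proves directly the easy half of the characterization quoted in the paper, that fat forests are flag complexes. It does not give the linearity of the resolution, which the later results rely on.
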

 
 \begin{proof}
We know that $K[\Delta(n_1,\ldots,n_e)]$ has a 2-linear resolution, so in particular, if $K[\Delta(n_1,\ldots,n_e)]=K[x_1,\ldots,x_N]/I$, 
$N=\sum_{i=1}^en_i-(e-1)$, then $I$ is generated in degree 2. It is clear that $x_ix_j\in I$ if and only if $x_i$ and $x_j$ do not belong to the same simplex.
\end{proof}

\begin{theorem}
If $1\le k<\dim(\Delta(n_1,\ldots,n_e))=\max\{n_i\}-1$, then $K[\Delta(n_1,\ldots,n_e)^{(k)}]=K[x_1,\ldots,x_N]/I$, where $I$ is generated in degree 2 and $k+2$.
If $k\ge\dim(\Delta(n_1,\ldots,n_e))=\max\{n_i\}-1$, then $I$ is generated in degree 2. The generators of degree 2 are the same for all $k\ge1$.
\end{theorem}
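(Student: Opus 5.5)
The plan is to describe the minimal generators of the Stanley-Reisner ideal directly. The minimal generators of the Stanley-Reisner ideal $I$ of any complex $\Sigma$ are the monomials $x_\tau$ with $\tau$ a \emph{minimal non-face} of $\Sigma$: $\tau\notin\Sigma$ but every proper subset of $\tau$ is a face. So I would determine the minimal non-faces of $\Sigma=\Delta(n_1,\ldots,n_e)^{(k)}$ and read off their cardinalities.

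First, note that a set $\tau\subseteq V$ is a face of $\Delta(n_1,\ldots,n_e)$ exactly when it lies in some simplex $S_i$. It is a face of the $k$-skeleton exactly when, in addition, $|\tau|\le k+1$.

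The minimal non-faces then split into two kinds. The first kind consists of sets $\tau$ that are not contained in any single $S_i$ but all of whose proper subsets are faces of the skeleton. I claim that such a $\tau$ has two elements. Since $k\ge1$, every pair $\{x_i,x_j\}$ lying in a common simplex is an edge of the skeleton. If $|\tau|\ge3$ and $\tau$ is not inside one simplex, I want to find a pair in $\tau$ not contained in a common simplex, which would make that pair a smaller non-face and contradict minimality. This is the one structural fact about thick trees needed: if every pair of a set lies in some common simplex, then the whole set lies in one simplex. I expect this to be the main obstacle. I would prove it by induction on $e$ using the gluing structure, in effect that any clique in the $1$-skeleton lies in one block. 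Suppose $G_{e-1}\cap S_e=\{p\}$, and $\tau$ meets both $S_e\setminus\{p\}$ and $G_{e-1}\setminus\{p\}$, say in $a$ and $b$. Then $a$ and $b$ lie in no common simplex, since $a$ belongs only to $S_e$ and $b\notin S_e$. Hence $\tau$ lies entirely in $S_e$ or entirely in $G_{e-1}$, and induction finishes the argument. (This is also the content of the previous lemma's description of the degree $2$ generators.) So the first kind yields exactly the degree $2$ generators $x_ix_j$ with $x_i,x_j$ not in a common simplex. These do not depend on $k\ge1$, which gives the last sentence of the theorem.

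The second kind consists of sets $\tau$ contained in some $S_i$ but with $|\tau|>k+1$. Minimality forces $|\tau|=k+2$, because removing a vertex must give a face. Conversely, any $(k+2)$-subset of some $S_i$ is a minimal non-face, since its $(k+1)$-subsets are faces of the skeleton. Such subsets exist if and only if some $n_i\ge k+2$, that is, $k<\max\{n_i\}-1$. In that case degree $k+2$ generators occur. Degree $2$ generators also occur when $e\ge2$; if $e=1$ only degree $k+2$ appears, so I would remark on this degenerate case.

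When $k\ge\max\{n_i\}-1$, the skeleton equals the thick tree itself and only degree $2$ generators remain, consistent with the previous lemma. Since every minimal non-face has size $2$ or $k+2$, this completes the argument.
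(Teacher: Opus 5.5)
Your proposal is correct and follows essentially the same route as the paper. Both arguments classify the minimal non-faces of the skeleton into pairs not lying in a common simplex and $(k+2)$-subsets of a single simplex. The one difference is that the paper takes the fact that every remaining minimal non-face lies inside one simplex implicitly from the degree-$2$ generation of the thick tree's Stanley--Reisner ideal (via fat-forest theory), whereas you prove this clique property directly by induction on the gluing, which makes your argument self-contained.
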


\begin{proof}
The generators of $I$ which are of degree 2 are determined by the 1-skeleton, which is the same for all $k\ge1$. 
If $k\ge\dim\Delta(n_1,\ldots,n_e)$, then $\Delta(n_1,\ldots,n_e)^{(k)}=
\Delta(n_1,\ldots,n_e)$, so, for each $k$, the generators of degree 2 are all $x_ix_j$ for which $x_i$ and $x_j$ do not belong to the same simplex.
The remaining generators of $I$ are of the form $x_{i_1}\cdots x_{i_l}$, where all $x_{i_j}$, $1\le i\le l$, belong to the same simplex. Such an element is a
generator for $I$ when $\{x_{i_1},\ldots,x_{i_l}\}$ does not belong to $\Delta(n_1,\ldots,n_e)^{(k)}$, but all subset of $\{x_{i_1},\ldots,x_{i_l}\}$ with $l-1$ elements do.
The only possibility is that $\{x_{i_1},\ldots,x_{i_l}\}$ is a subsimplex $\tau$ of dimension $k+1$, i.e., with $k+2$ elements, of a simplex $\sigma$ of dimension 
$\ge k+1$ in $\Delta(n_1,\ldots,n_e)$. If $k\ge\max\{n_i\}-1$, then $K[\Delta(n_1,\ldots,n_e)^{(k)}]=K[\Delta(n_1,\ldots,n_e)]$ which has a 2-linear resolution,
in particular has relations of degree 2.
\end{proof}

For $k<\max\{n_i\}-1$ write $I=I_2+I_{k+2}$, where $I_2$ is generated by the minimal generators for $I$ of degree 2, and $I_{k+2}$ is generated by the minimal generators for $I$ of degree $k+2$.

\begin{theorem}
We have that $K[x_1,\ldots,x_N]/I_2$ has a linear resolution.
\end{theorem}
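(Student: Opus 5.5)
The plan is to identify $I_2$ as the Stanley-Reisner ideal of the full thick tree and then use the known fact that thick trees (indeed all fat forests) have 2-linear resolutions. By the preceding theorem, the degree-2 minimal generators of $I$ are the same for all $k\ge1$. Taking $k\ge\max\{n_i\}-1$, where $\Delta(n_1,\ldots,n_e)^{(k)}=\Delta(n_1,\ldots,n_e)$, these generators are exactly the monomials $x_ix_j$ such that $x_i$ and $x_j$ do not lie in a common simplex $S_l$. By the lemma identifying $K[\Delta(n_1,\ldots,n_e)]$ as $K[x_1,\ldots,x_N]/I$, these monomials generate the whole Stanley-Reisner ideal of $\Delta(n_1,\ldots,n_e)$. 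Hence
$$I_2=I_{\Delta(n_1,\ldots,n_e)}\quad\text{and}\quad K[x_1,\ldots,x_N]/I_2=K[\Delta(n_1,\ldots,n_e)].$$

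Next I would check, directly from the definitions, that $\Delta(n_1,\ldots,n_e)$ is a fat forest. It is built by successively attaching simplices $S_{i+1}$ along $(S_1\cup\cdots\cup S_i)\cap S_{i+1}$, and this intersection is a point, so a $0$-simplex. This is exactly the recursive definition of a fat forest with $\dim H=0$.

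By \cite{fr,fr1}, quoted in the preliminaries, Stanley-Reisner rings of fat forests have 2-linear resolutions. Equivalently, Lemma \ref{spec} gives reg$(K[\Delta(n_1,\ldots,n_e)])=1$, and since the ideal is generated in degree 2 this forces the resolution to be 2-linear. Therefore $K[x_1,\ldots,x_N]/I_2$ has a linear resolution.

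There is no serious obstacle. The only point needing care is that the degree-2 part of $I$ for the skeleton coincides with the full Stanley-Reisner ideal of the thick tree, rather than just with its degree-2 component. This follows because that ideal is generated in degree 2, which the earlier lemma established. If one wanted to avoid citing \cite{fr}, an alternative route would use Fröberg's criterion: the graph whose edges are the generators of $I_2$ has as complement the 1-skeleton of $\Delta(n_1,\ldots,n_e)$, which is a block graph of cliques and hence chordal.
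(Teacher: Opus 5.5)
Your proposal is correct and takes essentially the same route as the paper, which simply identifies $K[x_1,\ldots,x_N]/I_2$ with $K[\Delta(n_1,\ldots,n_e)]$ and invokes the 2-linearity of Stanley--Reisner rings of fat forests. You add justification the paper leaves implicit, namely why $I_2$ is the full Stanley--Reisner ideal and why a thick tree is a fat forest, and your alternative arguments via regularity $1$ or Fr\"oberg's chordality criterion are also valid.
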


\begin{proof}
$K[x_1,\ldots,x_N]/I_2=K[\Delta(n_1,\ldots,n_e)]$ which has a 2-linear resolution since $\Delta(n_1,\ldots,n_e)$ is a fat forest.
\end{proof}

We are now ready to determine all Betti numbers of $K[\Delta(n_1,\ldots,n_e)^{(k)}]$. We will use that for a graded ring $R=K[x_1,\ldots,x_N]/I$, the Hilbert series
of $R$ equals $$R(t)=\sum_{i=0}^N(-1)^i\beta_{i,j}(R)t^j)/(1-t)^N.$$

\begin{theorem}
If $k\ge1$ we have that $\beta_{i,j}(K[\Delta(n_1,\ldots,n_e)^{(k)}])=\beta_{i,j}(K[\Delta(n_1,\ldots,n_e)])$ if $j-i<k+1$, and $\beta_{i,j}(K[\Delta(n_1,\ldots,n_e)^{(k)}])=0$
if $j-i>k+1$.
\end{theorem}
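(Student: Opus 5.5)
We argue with Hochster's formula, comparing the induced subcomplexes of $\Sigma=\Delta(n_1,\ldots,n_e)$ and of its $k$-skeleton $\Sigma^{(k)}$. The key observation is that induced subcomplexes commute with taking skeletons: for every $S\subseteq V$ we have $(\Sigma^{(k)})_S=(\Sigma_S)^{(k)}$, since both consist of the faces of $\Sigma$ contained in $S$ of dimension $\le k$. Hochster's formula then reads
$$\beta_{i,j}(K[\Sigma^{(k)}])=\sum_{|S|=j}\dim_K\widetilde H_{j-i-1}\bigl((\Sigma_S)^{(k)};K\bigr),$$
and we compare this term by term with the corresponding expression for $\beta_{i,j}(K[\Sigma])$.

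\textbf{The range $j-i<k+1$.} Put $m=j-i-1$, so $m<k$, that is $m\le k-1$. For any simplicial complex $\Gamma$, the reduced homology $\widetilde H_m(\Gamma)$ depends only on the simplicial chain groups in degrees $m-1,m,m+1$ and the boundary maps between them. The $k$-skeleton has the same chain groups as $\Gamma$ in all degrees $\le k$. Since $m+1\le k$, we get $\widetilde H_m(\Gamma^{(k)})=\widetilde H_m(\Gamma)$. Applying this with $\Gamma=\Sigma_S$ for each $S$ and summing gives $\beta_{i,j}(K[\Sigma^{(k)}])=\beta_{i,j}(K[\Sigma])$.

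\textbf{The range $j-i>k+1$.} Here $m=j-i-1>k$. The complex $(\Sigma_S)^{(k)}$ has dimension at most $k$, so its chain groups vanish in degrees above $k$, and hence $\widetilde H_m=0$. Every summand is zero, so $\beta_{i,j}(K[\Sigma^{(k)}])=0$.

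\textbf{Remarks.} Only the skeleton identity $(\Sigma^{(k)})_S=(\Sigma_S)^{(k)}$ needs checking, and it is immediate. The argument in fact applies to the $k$-skeleton of any simplicial complex; nothing specific to thick trees is used. The borderline case $j-i=k+1$, where $\widetilde H_k$ of the skeleton differs from that of $\Sigma_S$, is not covered by this statement. Those Betti numbers are the ones to be recovered afterwards from the Hilbert series in Corollary \ref{hill} via equation (\ref{hilbser}). The case $j-i<k+1$ also uses the Betti numbers of $K[\Sigma]$, which are known from Lemma \ref{spec} together with the 2-linearity of the resolution.
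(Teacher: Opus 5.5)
Your proposal is correct and follows essentially the same route as the paper. Both use Hochster's formula together with two facts: the $k$-skeleton has the same chain groups as $\Delta(n_1,\ldots,n_e)$ in degrees $\le k$, so $\widetilde H_s$ of induced subcomplexes agree for $s<k$, and it has no chains above degree $k$. Your version is a bit more careful than the paper's: you state the identity $(\Sigma^{(k)})_S=(\Sigma_S)^{(k)}$ explicitly and apply the comparison to every $S\subseteq V$, whereas the paper's text restricts, apparently by a slip, to $|S|\le k+1$.
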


\begin{proof}
$\Delta(n_1,\ldots,n_e)^{(k)}$ is identical to $\Delta(n_1,\ldots,n_e)$ in dimension $\le k$, so for all induced subcomplexes $(\Delta(n_1,\ldots,n_e))_S$ 
with $|S|\le k+1$ we have
$\widetilde H_s((\Delta(n_1,\ldots,n_e)^{(k)})_S)=\widetilde H_s((\Delta(n_1,\ldots,n_e))_S)$
if $s<k$. Thus, according to Hochster's theorem, 
$$\beta_{i,j}(K[\Delta(n_1,\ldots,n_e)^{(k)}])=
\sum_{\overset{S\subseteq V}{|S|=j}}\dim_K\widetilde H_{j-i-1}(\Delta(n_1,\ldots,n_e)_S;K)=$$
$$\sum_{\overset{S\subseteq V}{|S|=j}}
\dim_K\widetilde H_{j-i-1}(\Delta(n_1,\ldots,n_e)^{(k)})_S;K)=\sum\beta_{i,j}(K[\Delta(n_1,\ldots,n_e)])$$ 
if $j-i<k+1.$
Furthermore $\Delta(n_1,\ldots,n_e)^{(k)}=0$ in dimension $>k$, so we get
$\beta_{i,j}(K[\Delta(n_1,\ldots,n_e)^{(k)}])=0$ if $j-i>k+1$, using Hochster's theorem in the same way.
\end{proof}

\begin{theorem}\label{main}
The nonzero Betti numbers of $K[\Delta(n_1,\ldots,n_e)^{(k)}]$ are $\beta_{0,0}=1$, and for $i\ge1$ 
$$\beta_{i,i+1}(K[\Delta(n_1,\ldots,n_e)^{(k)}=-\sum_{s=1}^e{N-n_s\choose i+1}+(e-1){N-1\choose i+1}$$
where
$$N=\sum_{s=1}^en_s-(e-1),$$
and, if $k<n-1$, 
$$\beta_{i,k+1+i}(K[\Delta(n_1,\ldots ,n_e)^{(k)}])=\sum_{j=k+2}^n\sum_{s=1}^e{n_s\choose j}(-1)^{k-j}{N-j\choose k+i+1-j},$$
where $n=\max\{n_i\}$.
\end{theorem}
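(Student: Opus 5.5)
Combining the previous theorem with the Hilbert series of Corollary \ref{hill}, the resolution of $K[\Delta(n_1,\ldots,n_e)^{(k)}]$ can only live in two strands. These are the linear strand $j=i+1$ and the strand $j=i+k+1$. The first strand coincides with that of $K[\Delta(n_1,\ldots,n_e)]$, and the second is a priori unknown. Because the two strands are separated by $k\ge1$ in the index $j-i$, there is no cancellation between them in the alternating sum (\ref{hilbser}), except possibly at coinciding values of $j$. The plan is therefore to read off all Betti numbers from the numerator of the Hilbert series.

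First, the strand $j=i+1$. By the previous theorem, $\beta_{i,i+1}$ for the skeleton equals $\beta_{i,i+1}$ for the thick tree. The latter has a $2$-linear resolution, so its Betti numbers are determined by the numerator of the Hilbert series in Lemma \ref{spec}. Writing $\sum_s (1-t)^{N-n_s}-(e-1)(1-t)^{N-1}$ as the numerator over $(1-t)^N$, the coefficient of $t^{i+1}$ is
$$(-1)^{i+1}\Big[\sum_s{N-n_s\choose i+1}-(e-1){N-1\choose i+1}\Big].$$
Since only the $\beta_{i,i+1}$ contribute, with sign $(-1)^i$, we get the claimed formula. Equivalently, one can evaluate Hochster's formula directly: an induced subcomplex on $S$ has $\widetilde H_0$ of rank equal to the number of components minus one.

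Second, the strand $j=i+k+1$ when $k<n-1$. Compare the Hilbert series of the skeleton with that of the full complex. By Lemma \ref{fvector}, the difference is
$$-\sum_{j=k+2}^{n} c_j\,t^j/(1-t)^j=-\sum_{j=k+2}^n c_j t^j(1-t)^{N-j}\big/(1-t)^N .$$
The linear strands of the two rings agree, and the full ring has only the linear strand. Hence
$$\sum_i(-1)^i\beta_{i,i+k+1}\,t^{i+k+1}=-\sum_{j=k+2}^n c_j t^j(1-t)^{N-j}.$$
Extracting the coefficient of $t^{i+k+1}$ gives
$$(-1)^{i+1}\sum_{j}c_j(-1)^{i+k+1-j}{N-j\choose i+k+1-j},$$
and simplifying the sign to $(-1)^{k-j}$ yields the stated formula.

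The main obstacle is justifying that both strands are recovered without cancellation. For a fixed degree $d$, the only Betti numbers in degree $d$ are $\beta_{d-1,d}$ and $\beta_{d-k-1,d}$, which sit in homological degrees of different parity exactly when $k$ is odd. So the Hilbert series alone does not separate them. I would handle this by the difference argument above: the linear strand is known independently from the previous theorem, so subtracting it isolates the second strand exactly, whatever the parities. The remaining care is with boundary indices, namely $\beta_{0,0}=1$, terms with $i=0$ in the second strand (which vanish since $j\ge k+2>k+1$), and the range $j\le n$ where $c_j=0$ beyond $n$.
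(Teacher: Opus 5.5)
Your proposal is correct and follows essentially the same route as the paper: the preceding theorem confines the resolution to the strands $j-i=1$ and $j-i=k+1$. The linear strand is read off from the numerator of the thick tree's Hilbert series in Lemma \ref{spec}, and the $(k+1)$-strand is isolated by subtracting that series from the skeleton's, which leaves $-\sum_{j=k+2}^n c_j t^j(1-t)^{N-j}$ over $(1-t)^N$. You also observe that $\beta_{d-1,d}$ and $\beta_{d-k-1,d}$ both sit in degree $d$, so it is this subtraction, not the Hilbert series alone, that separates them; that is exactly what justifies the paper's ``subtract the part'' step.
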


\begin{proof}
For $j-i<k+1$ we can calculate $\beta_{i,j}(K[\Delta(n_1,\ldots,n_e)^{(k)}])$ from the Hilbert series of $K[\Delta(n_1,\ldots,n_e)])=K[x_1,\ldots,x_N]/I_2$.
We know that $K[x_1,\ldots,x_N]/I_2$ has a linear resolution, so the only nonzero Betti numbers if $i>0$ are $\beta_{i,i+1}$.
Let $N=\sum_{s=1}^en_s-(e-1)$ and $n=\max\{n_i\}$. The Hilbert series of $K[x_1,\ldots,x_N]/I_2$ is according to \cite[Theorem 1]{fr1}
$$\sum_{s=1}^e\frac{1}{(1-t)^{n_s}}-\frac{(e-1)}{1-t}=\frac{\sum_{s=1}^e(1-t)^{N-n_s}-(e-1)(1-t)^{N-1}}{(1-t)^N}=\frac{\sum_{i=0}^Nc_it^i}{(1-t)^N}.$$
Now $\beta_{i,i+1}$ is $(-1)^ic_{i+1}$, and 
$$c_{i+1}=(-1)^{i+1}(\sum_{s=1}^e{N-n_s\choose i+1}-(e-1){N-1\choose i+1}).$$
The Betti numbers for $j-i=k+1$ are, according to Lemmas \ref{fvector} and \ref{size}, determined from the Hilbert series of $k[\Delta(n,e)^{(k)}]$, which is
$$1+\frac{Nt}{1-t}+\sum_{j=2}^{k+1}\frac{\sum_{s=1}^e{n_s\choose j}t^j}{(1-t)^j}=
\frac{(1-t)^N+Nt(1-t)^{N-1}+\sum_{j=2}^{k+1}\sum_{s=1}^e{n_s\choose j}t^j(1-t)^{N-j}}{(1-t)^N}.$$
To get the Betti numbers $\beta_{i,i+1+k}$ we subtract the part that gives the Betti numbers $\beta_{i,i+1}$, 
thus the part that comes from the Hilbert series of  $K[\Delta(n,e)]$. We now write this series as
$$\frac{(1-t)^N+Nt(1-t)^{N-1}+\sum_{j=2}^n\sum_{s=1}^e{n_s\choose j}t^j(1-t)^{N-j}}{(1-t)^N},$$
so we can read off the Betti numbers $\beta_{i,i+k+1}(K[\Delta(n,e)_{(k)}])$ from
$$\frac{-\sum_{j=k+2}^nt^j\sum_{s=1}^e{n_s\choose j}(1-t)^{N-j}}{(1-t)^N}=\sum_{s=k+2}^Nd_st^s.$$
Now $\beta_{i,i+k+1}=(-1)^id_{i+k+1}.$
We get 
$$d_{k+i+1}=-\sum_{j=k+2}^n\sum_{s=1}^e{n_s\choose j}(-1)^{k+i+1-j}{N-j\choose k+i+1-j}.$$
\end{proof}

We specialize to the case when $n_i=n$ for all $i$.

\begin{corollary}
The nonzero Betti numbers of $K[\Delta(n,e)^{(k)}]$ are $\beta_{0,0}=1$, and for $i\ge1$ 
$$\beta_{i,i+1}=-e{N-n\choose i+1}+(e-1){N-1\choose i+1}$$
and, if $k<n-1$, $\beta_{i,k+1+i}(K[\Delta(n,e)^{(k)}])=\sum_{j=k+2}^ne{n\choose j}(-1)^{k-j}{N-j\choose k+i+1-j}$.
\end{corollary}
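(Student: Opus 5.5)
This is the special case $n_1=\cdots=n_e=n$ of Theorem \ref{main}, so the plan is to substitute into that theorem and simplify; nothing new has to be proved. Recall the setting: for $j-i<k+1$ the Betti numbers agree with those of the thick tree $K[\Delta(n_1,\ldots,n_e)]$, and for $j-i>k+1$ they vanish. Since $K[\Delta(n_1,\ldots,n_e)]$ has a 2-linear resolution, the only rows that can survive for $i\ge1$ are $j=i+1$ and $j=i+k+1$. The second row is present only when $k<n-1$. Otherwise the skeleton is the whole complex and there is no generator of degree $k+2$.

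The steps are as follows.

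\begin{enumerate}
\item Put $n_s=n$ for all $s$. Then $N=\sum_s n_s-(e-1)=en-(e-1)$, and $\max\{n_i\}=n$.
\item In the linear row of Theorem \ref{main}, the sum $\sum_{s=1}^e{N-n_s\choose i+1}$ has $e$ equal terms. It becomes $e{N-n\choose i+1}$, which gives
$$\beta_{i,i+1}=-e{N-n\choose i+1}+(e-1){N-1\choose i+1}.$$
\item In the second row, the inner sum $\sum_{s=1}^e{n_s\choose j}$ becomes $e{n\choose j}$. The condition $k<\max\{n_i\}-1$ becomes $k<n-1$. This gives
$$\beta_{i,k+1+i}=\sum_{j=k+2}^n e{n\choose j}(-1)^{k-j}{N-j\choose k+i+1-j}.$$
\end{enumerate}

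The only place needing care is the sign bookkeeping inherited from the proof of Theorem \ref{main}. There $\beta_{i,i+k+1}=(-1)^i d_{i+k+1}$, where $d_{k+i+1}=-\sum_j\sum_s{n_s\choose j}(-1)^{k+i+1-j}{N-j\choose k+i+1-j}$. Combining the factors gives
$$(-1)^i\cdot(-1)\cdot(-1)^{k+i+1-j}=(-1)^{k-j},$$
which matches the stated sign. I will recheck this in the specialized formula to make sure the exponent simplifies the same way.

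As a sanity check that the two rows do not overlap, note that for $k\ge1$ the shifts $i+1$ and $i+k+1$ differ. Hence equation \ref{hilbser} splits cleanly by degree, and the two rows can be read off independently from the Hilbert series.
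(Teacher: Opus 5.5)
Your proposal is correct and follows the paper's route: the corollary is Theorem~\ref{main} with $n_1=\cdots=n_e=n$, so $N=en-(e-1)$ and both sums over $s$ collapse to a factor $e$; your sign check $(-1)^i\cdot(-1)\cdot(-1)^{k+i+1-j}=(-1)^{k-j}$ is also right. You were right to make $k\ge1$ explicit: the statement leaves this hypothesis unstated, but without it the rows $j=i+1$ and $j=i+k+1$ coincide and the two formulas would both claim $\beta_{i,i+1}$.
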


\begin{corollary}
We have pd($K[\Delta(n,e)^{(k)})]=N-2$ and reg($K[\Delta(n,e)^{(k)}])=k+1$ if $1\le k<\dim(\Delta(n,e))$. If $k\ge\dim(\Delta(n,e))$, then reg($K[\Delta(n,e)^{(k)}])=1$.
\end{corollary}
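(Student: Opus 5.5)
The plan is to read both invariants off the Betti table described in Theorem \ref{main} and in the theorem just before it: nonzero entries can only sit in the rows $j-i=1$ and $j-i=k+1$, plus $\beta_{0,0}$. For $k\ge\dim\Delta(n,e)=n-1$ there is nothing to prove beyond what is already known. In that case $\Delta(n,e)^{(k)}=\Delta(n,e)$ is a thick tree, hence a fat forest, so Lemma \ref{spec} gives reg$=1$ and pd$=N-2$ with $N=ne-(e-1)$. The real work is therefore the case $1\le k<n-1$.

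\emph{Projective dimension.} The linear row of $K[\Delta(n,e)^{(k)}]$ coincides with that of $K[\Delta(n,e)]$, since $j-i=1<k+1$. The first step is to check that $\beta_{N-2,N-1}\neq0$ from the corollary's formula with $i=N-2$: we get $\beta_{N-2,N-1}=-e\binom{N-n}{N-1}+(e-1)\binom{N-1}{N-1}=e-1$, because $N-n<N-1$. This is positive when $e\ge2$, and $e=1$ is a simplex where the claims can be checked by hand. So pd$\ge N-2$.

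For the upper bound I would use Hochster's theorem. A contribution to $\beta_{i,j}$ with $i\ge N-1$ forces $j\ge N-1$, so it comes from a vertex set $S$ with $|S|\ge N-1$. For $|S|=N$ or $N-1$, the induced subcomplex $\Delta_S$ is a tree of simplices in which possibly one vertex has been deleted. Its $k$-skeleton has reduced homology only in degrees $0$ and $k$. The needed degree is $j-i-1$, which equals $-1$ or $0$ when $i\ge j-1$. The case $i=j$ is impossible except for $S=\emptyset$. The case $i=j-1$ would need $\widetilde H_0\ne0$ for $|S|=N$, but the whole complex is connected. It would also need $\widetilde H_0\ne0$ for $|S|=N-1$, which does happen (removing a gluing vertex disconnects the complex), but that only gives $i=N-2$. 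Hence pd$=N-2$. Alternatively, one can simply observe that the row $j-i=k+1$ has $i\le N-k-1\le N-2$.

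\emph{Regularity.} Rows above $k+1$ vanish, so reg$\le k+1$. The key step is to exhibit one nonzero $\beta_{i,i+k+1}$. I would take $i=1$: the minimal generators of degree $k+2$ of $I$, described in the generator theorem above, are exactly the $(k+2)$-subsets of a single facet. Since $k+2\le n$, at least one such subset exists, so $\beta_{1,k+2}=e\binom{n}{k+2}>0$. One can also confirm this via Hochster's theorem: the induced subcomplex on such a set is the boundary of a $(k+1)$-simplex, with $\widetilde H_k=K$. Thus reg$=k+1$.

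The main obstacle is making sure the alternating sums in the closed formulas, which could in principle cancel, do not interfere. Using the combinatorial descriptions (the generators and Hochster's theorem) instead of the formulas avoids this issue for the lower bounds.
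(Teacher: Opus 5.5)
Your proof is correct and follows the paper's route: you read both invariants off the two-row Betti table given by Theorem \ref{main} and the theorem preceding it. You also make explicit the nonvanishing checks the paper leaves implicit, namely $\beta_{N-2,N-1}=e-1>0$ and $\beta_{1,k+2}=e\binom{n}{k+2}>0$, the latter via the minimal generators or Hochster's formula instead of the alternating sum.

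One correction: the case $e=1$ cannot be ``checked by hand'', because for a single $(n-1)$-simplex the claims are false. There pd$=n-k-1\neq N-2$ when $2\le k<n-1$, and reg$=0$ when $k\ge n-1$. So the statement, like Lemma \ref{spec}, tacitly requires $e\ge2$, which is exactly the case your argument covers.
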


\begin{proof}
If $k\ge\dim(\Delta(n,e))$, then $K[\Delta(n,e)^{(k)}]=K[\Delta(n,e)]$, so reg($K[\Delta(n,e)^{(k)}])=1$ since $\Delta(n,e)$ is a fat forest. The remaining claims follow
directly from Theorem \ref{main}.
\end{proof}

\begin{corollary}
If $k\le1$, then $K[\Delta(n,e)^{(k)}]$ is Cohen-Macaulay for all sets of $n_i$. If $k>1$, then $K[\Delta(n,e)^{(k)}]$ is Cohen-Macaulay only if 
$n=2$.
\end{corollary}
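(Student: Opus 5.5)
The plan is to split into the cases $k\le 1$ and $k>1$. For $k\le1$ I will use low-dimensional facts. For $k>1$ I will use depth via the Auslander--Buchsbaum formula, with Reisner's criterion as a cross-check. Throughout I assume $e\ge2$, since for $e=1$ the complex is a skeleton of a simplex and is always Cohen--Macaulay. I also read the hypothesis ``for all sets of $n_i$'' as allowing general $\Delta(n_1,\ldots,n_e)$; the argument does not use that the $n_i$ are equal.

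\emph{Case $k\le1$.} For $k=0$ the skeleton is a set of $N$ points. It is $0$-dimensional, so $K[\Delta^{(0)}]$ has Krull dimension $1$ and positive depth, since the maximal ideal is not an associated prime of a radical ideal with no isolated point component. Hence it is Cohen--Macaulay. For $k=1$ the skeleton $\Delta(n_1,\ldots,n_e)^{(1)}$ is a graph, and it is connected because a thick tree is connected. By Reisner's criterion a $1$-dimensional complex is Cohen--Macaulay exactly when it is connected: vertex links are nonempty discrete sets, so only $\widetilde H_0$ of the whole complex must vanish. This settles $k\le1$.

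\emph{Case $k>1$ and $n\ge3$.} Put $d=\dim\Delta^{(k)}=\min\{k,n-1\}\ge2$, so that $\dim K[\Delta^{(k)}]=d+1\ge3$.
\begin{itemize}
\item If $k\ge n-1$, then $\Delta^{(k)}=\Delta$, and Lemma~\ref{spec} gives $\operatorname{depth}=2<n=d+1$. So the ring is not Cohen--Macaulay.
\item If $1<k<n-1$, the preceding corollary, or directly Theorem~\ref{main}, gives pd$=N-2$, so Auslander--Buchsbaum gives depth $2<k+1=d+1$.
\end{itemize}
The point needing care is that pd$=N-2$ holds: the top Betti number $\beta_{N-2,N-1}$ must be nonzero. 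I would check this via Hochster's theorem rather than the binomial formula. Take $S=V\setminus\{v\}$ with $v$ a gluing vertex. Then $\Delta_S$ is disconnected, so $\widetilde H_0\neq0$, and this is unchanged in the $k$-skeleton for $k\ge1$.

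A cleaner alternative avoiding Betti numbers is Reisner's criterion directly. The link of a gluing vertex $v$ in $\Delta^{(k)}$ is the disjoint union of $(k-1)$-skeleta of at least two simplices. That link has dimension $d-1\ge1$ but is disconnected, so its $\widetilde H_0\ne0$, which violates the criterion. I expect this to be the main step. One must ensure the link has dimension at least $1$, which is exactly where $k\ge2$ and $n\ge3$ enter.

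\emph{Case $k>1$ and $n=2$.} Here $\Delta$ is an ordinary tree, and $\Delta^{(k)}=\Delta$ is a connected graph. It is therefore Cohen--Macaulay by the $k=1$ case, which shows the ``only if'' direction is sharp.
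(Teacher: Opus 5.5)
Your proof is correct and follows the paper's route. You use Reisner's criterion (a connected graph is Cohen--Macaulay) for $k\le1$ and for $n=2$, and for $k>1$, $n\ge3$ you compare depth $2$ (pd$=N-2$ plus Auslander--Buchsbaum) with Krull dimension $\ge3$. Your Hochster check that $\beta_{N-2,N-1}\neq0$ and your explicit exclusion of $e=1$ (where the second claim actually fails) fill in points the paper leaves implicit; in your optional link argument with unequal $n_i$, choose the gluing vertex on a simplex of maximal size.
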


\begin{proof}
If $k\le1$ we use Reisner's criterion \cite{re} saying that $k[\Sigma]$ is Cohen-Macaulay if and only if for each face $\sigma\in\Sigma$, $\widetilde H_i({\rm link}_\sigma)=0$ 
for all $i<\dim({\rm link}_\sigma)$. If $k>1$ and $n=2$, then $K[\Delta(n,e)^{(k)}]=K[\Delta(n,e)]$ which is Cohen-Macaulay according to Reisner's criterion. If $k>1$ and
$n>2$, then depth($K[\Delta(n,e)^{(k)}])=2$ and $\dim(K[\Delta(n,e)^{(k)}])\ge3$. 
\end{proof}

We illustrate the results with an example. Consider $K[\Delta(3,4,5)^{(k)}]$. Here is the result for $k=1$. Row 1 contains $\beta_{i,i+1}$, row 2 contains $\beta_{i,i+2}$.

 \medskip     
$\begin{matrix}
      &0&1&2&3&4&5&6&7&8\\
      \text{total:}&1&40&195&456&629&540&285&85&11\\
      \text{0:}&1&\text{.}&\text{.}&\text{.}&\text{.}&\text{.}&\text{.}&\text{.}&\text{.}\\
      \text{1:}&\text{.}&26&103&197&224&160&71&18&2\\
      \text{2:}&\text{.}&15&99&280&440&415&235&74&10\\
      \end{matrix}$

\medskip
We show how to determine the Betti numbers from Theorem \ref{main}. We have $\beta_{i,i+1}(K[\Delta(3,4,5)^{(1)}]=\beta_{i,i+1}(K[\Delta(3,4,5)]$,
and $K[\Delta(3,4,5)]$ has Hilbert series 
$$1+\frac{10t}{1-t}+\frac{19t^2}{(1-t)^2}+\frac{15t^3}{(1-t)^3}+\frac {6t^4}{(1-t)^4}+\frac{t^5}{(1-t)^5}=$$
$$\frac{1-26t^2+103t^3-197t^4+224t^5-160t^6+71t^7-18t^8+2t^9}{(1-t)^{10}}=\frac{1+\sum_{i=1}^9c_it^i.}{(1-t)^{10}}$$
Now $\beta_{i,i+1}=(-1)^ic_{i+1}$. To get the remaining Betti numbers we determine the Hilbert series of $K[\Delta(3,4,5)^{(1)}]$, which is
$$1+\frac{10t}{1-t}+\frac{19t^2}{(1-t)^2}=\frac{1-26t^2+88t^3-98t^4-56t^5+280t^6-344t^7+217t^8-72t^9+10t^{10}}{(1-t)^{10}}=
\frac{\sum_{i,j}(-1)^i\beta_{i,j}t^j}{(1-t)^{10}}.$$
We have $\beta_{2,3}-\beta_{1,3}=88$. Since $\beta_{2,3}=103$, we get $\beta_{2,3}=15$. We have $\beta_{2,4}-\beta_{3,4}=-98$. Since
$\beta_{3,4}=197$, we get $\beta_{2,4}=99$ a.s.o

\medskip\noindent
Here is the result for $K[\Delta(3,4,5)_{(2)}]$.

\medskip
$\begin{matrix}
      &0&1&2&3&4&5&6&7&8\\
      \text{total:}&1&32&138&282&334&240&102&23&2\\
      \text{0:}&1&\text{.}&\text{.}&\text{.}&\text{.}&\text{.}&\text{.}&\text{.}&\text{.}\\
      \text{1:}&\text{.}&26&103&197&224&160&71&18&2\\
      \text{2:}&\text{.}&\text{.}&\text{.}&\text{.}&\text{.}&\text{.}&\text{.}&\text{.}&\text{.}\\
      \text{3:}&\text{.}&6&35&85&110&80&31&5&\text{.}\\
      \end{matrix}$

\medskip
And here for $K[\Delta(3,4,5)_{(3)}]$.

\medskip
$\begin{matrix}
      &0&1&2&3&4&5&6&7&8\\
      \text{total:}&1&27&108&207&234&165&72&18&2\\
      \text{0:}&1&\text{.}&\text{.}&\text{.}&\text{.}&\text{.}&\text{.}&\text{.}&\text{.}\\
      \text{1:}&\text{.}&26&103&197&224&160&71&18&2\\
      \text{2:}&\text{.}&\text{.}&\text{.}&\text{.}&\text{.}&\text{.}&\text{.}&\text{.}&\text{.}\\
      \text{3:}&\text{.}&\text{.}&\text{.}&\text{.}&\text{.}&\text{.}&\text{.}&\text{.}&\text{.}\\
      \text{4:}&\text{.}&1&5&10&10&5&1&\text{.}&\text{.}\\
      \end{matrix}$

\section{Competing interests}
There are no competing interests.

\end{document}